\documentclass[12pt,reqno]{amsart}
\usepackage{epsfig}
\usepackage{amsmath}
\usepackage{amssymb, amsthm}
\usepackage{graphicx}
\usepackage{color}
\usepackage{psfrag}

\newcommand{\Z}{\mathbb{Z}}
\newcommand{\R}{\mathbb{R}}

\numberwithin{equation}{section}

%
%

%
\newtheorem{maintheorem}{Theorem}


\newtheorem{proposition}[subsection]{Proposition}
\newtheorem{theorem}[subsection]{Theorem}
\newtheorem{lemma}[subsection]{Lemma}


%


\begin{document}
\author{M. Bronzi}
\address{Departamento de Matem\'atica,
  ICMC-USP S\~{a}o Carlos, Caixa Postal 668, 13560-970 S\~{a}o
  Carlos-SP, Brazil.}
\email{bronzi@icmc.sc.usp.br}

\author{A. Tahzibi}
\address{Departamento de Matem\'atica,
  ICMC-USP S\~{a}o Carlos, Caixa Postal 668, 13560-970 S\~{a}o
  Carlos-SP, Brazil.}
\email{tahzibi@icmc.sc.usp.br}\urladdr{http://www.icmc.sc.usp.br/$\sim$tahzibi}

\title{Homoclinic tangency and variation of entropy}
 \begin{abstract}

    In this paper we study the effect of a homoclinic tangency in the variation of the topological entropy.
    We prove that a diffeomorphism with a homoclinic tangency associated to a basic hyperbolic set with maximal entropy is a point of entropy variation in the $C^{\infty}$-topology. We also prove results about variation of entropy in other topologies and when the tangency does not correspond to a basic set with maximal entropy.
   We also show  an example of discontinuity  of the entropy among $C^{\infty}$ diffeomorphisms of  three dimensional manifolds.

\end{abstract}

\maketitle



\section{Introduction}
Topological entropy is one of the most important invariants of
topological conjugacy in dynamical systems. By the
$\Omega-$stability of Axiom A diffeomorphisms with no cycle
condition, it comes out that the entropy is a $C^1$-locally
constant function among such dynamics. We say that a diffeomorphism
$f$ is a point of constancy of topological entropy in $C^k$
topology if there exists a $C^k$-neighborhood $\mathcal{U}$ of $f$
such that for any diffeomorphism $g \in \mathcal{U}, h(g)=h(f).$
We also call a diffeomorphism as a point of  entropy variation if it is not a point of constancy.

In \cite{PS2}, Pujals and Sambarino proved that  surface
diffeomorphisms far from homoclinic tangency are the constancy
points of topological entropy in $C^{\infty}$ topology. In this
paper we address the reciprocal problem. We are interested
in the effect of a homoclinic tangency to the variation of the
topological entropy for a surface diffeomorphism.  Of course after
unfolding a homoclinic tangency, new periodic points will emerge,
but it is not clear whether they contribute to the variation of
the topological entropy. We mention that D\'{i}az-Rios \cite{diazrios} studied
unfolding of  critical saddle-node horseshoes and when the
saddle-node horseshoe is not an attractor they proved that the
entropy may decrease after the bifurcation. In our context, the tangency occurs
outside a basic hyperbolic set.

For Axiom A diffeomorphisms, by the spectral decomposition theorem
of Smale (see \emph{e.g.} \cite{Sm})), we have
$\Omega(f)=\Lambda_1 \cup \Lambda_2 \cup \dots \cup \Lambda_k$,
where each $\Lambda_i$ is a basic set, i.e,  an isolated
$f$-invariant hyperbolic set with a dense orbit. By the definition
of topological entropy we have
    $$
        h(f) = \max_{0\le i\le k}{h(f|_{\Lambda_i})}.
    $$
So, we conclude that there exists  a set  (at least one) which is ``\emph{responsible}"
for the topological entropy of an Axiom A. By this we mean,  there exists
some $k_0\in\{1,\dots,k\}$ such that $h(f)=h(f|_{\Lambda_{k_0}})$. Such $\Lambda_{k_0}$ is called responsible for entropy.

We consider  a class of diffeomorphisms on the frontier of Axiom A
systems which exhibit a homoclinic tangency  corresponding to a
periodic point of $\Lambda_{i_0}.$ We show that  the topological
entropy increases after small $C^{\infty}$ perturbations.

More precisely, consider a parametrized family $f_{\mu}: M \to M$
of diffeomorphisms of a closed surface $M$ unfolding generically (like in \cite{palistakens}) a
homoclinic tangency at $\mu =0$  where  $\Omega(f_0) = \Lambda_1
\cup \cdots \cup \Lambda_k \cup \mathcal{O}(q)$ where each $\Lambda_i$
is  an isolated hyperbolic set and $\mathcal{O}(q)$  is the unique homoclinic  tangency orbit
associated to a saddle fixed point $p$ of some $\Lambda_{i_0}.$

\begin{maintheorem} \label{t.responsible}
    Let $f_{\mu}$ be a one parameter family of $C^{2}$ surface
    diffeomorphism as above, then
if $\Lambda_{i_0}$ is responsible for the entropy,  $f_0$ is a
        variation point of the topological entropy in $C^{r}$ topology, for $2 \le r \le \infty$.
            \end{maintheorem}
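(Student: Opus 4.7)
The plan is to produce, for $\mu$ arbitrarily close to $0$ on the appropriate side, a hyperbolic basic set $\tilde{\Lambda}_\mu$ of $f_\mu$ with $h(f_\mu|_{\tilde{\Lambda}_\mu}) > h(f_0|_{\Lambda_{i_0}})$. Since $\Lambda_{i_0}$ is responsible for the entropy, $h(f_0) = h(f_0|_{\Lambda_{i_0}})$, so
\[
h(f_\mu) \;\ge\; h(f_\mu|_{\tilde{\Lambda}_\mu}) \;>\; h(f_0).
\]
This shows $f_0$ is not a point of constancy in any $C^r$-topology in which $f_\mu \to f_0$, giving variation in $C^r$ for $2 \le r \le \infty$.

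The first step is routine: by structural stability of basic sets, each $\Lambda_i$ admits a hyperbolic continuation $\Lambda_i^\mu$ for $\mu$ small, with $h(f_\mu|_{\Lambda_i^\mu}) = h(f_0|_{\Lambda_i})$. Fix a Markov partition $\{R_1, \dots, R_m\}$ of $\Lambda_{i_0}$ with $p \in R_1$, and let $A$ denote its transition matrix, so $h(\Lambda_{i_0}) = \log \lambda(A)$; this partition continues to a Markov partition $\{R_1^\mu, \dots, R_m^\mu\}$ of $\Lambda_{i_0}^\mu$. Around the tangency point $q$, pick a small rectangle $R_*$ whose sides are aligned with the local stable and unstable foliations of $p$.

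The crux of the argument is to construct $\tilde{\Lambda}_\mu$ and to check that its entropy strictly exceeds $h(\Lambda_{i_0})$. By the generic unfolding assumption, on one side of $\mu = 0$ the tangency splits into two transverse homoclinic intersections inside $R_*$. The $\lambda$-lemma together with the inclination-type estimates used in Palis-Takens yield a return time $N = N(\mu)$ for which $f_\mu^N(R_1^\mu)$ crosses $R_*$ in a Markov fashion and $f_\mu^N(R_*)$ crosses $R_1^\mu$ in a Markov fashion. Adjoining $R_*$ as an extra symbol produces a hyperbolic basic set $\tilde{\Lambda}_\mu \supset \Lambda_{i_0}^\mu$ whose transition matrix $\tilde{A}$ contains $A$ as a principal block plus one new row and column encoding orbits that pass through $R_*$. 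Because the new symbol participates in cycles through $R_1$ that are not represented in $A$, the Perron-Frobenius theorem gives $\lambda(\tilde{A}) > \lambda(A)$, whence $h(f_\mu|_{\tilde{\Lambda}_\mu}) > h(\Lambda_{i_0})$.

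The main obstacle is the hyperbolic construction in the previous paragraph. Establishing uniform hyperbolicity of the returns through $R_*$ requires $C^2$ estimates to control the quadratic nature of the contact and the generic-unfolding assumption to guarantee that the transverse homoclinic points split at positive speed. Verifying that the new symbol genuinely enlarges the combinatorics, rather than merely reproducing cycles already present in $A$, uses the hypothesis that $\mathcal{O}(q)$ is the unique tangency orbit of $f_0$, so that the returns through $R_*$ realize concatenations absent in the original subshift. Once the strict inequality $\lambda(\tilde{A}) > \lambda(A)$ is in hand, the theorem follows immediately.
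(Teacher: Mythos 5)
Your proposal follows essentially the same route as the paper: extend a Markov partition of $\Lambda_{i_0}(\mu)$ by rectangles along the newly created transverse homoclinic orbit, observe that the enlarged transition matrix is irreducible and contains the original one as a proper principal submatrix, apply Perron--Frobenius to get a strict increase of the spectral radius, and use responsibility of $\Lambda_{i_0}$ to conclude $h(f_\mu)>h(f_0)$. The only cosmetic difference is that you adjoin a single rectangle $R_*$ with transitions under $f_\mu^{N}$, whereas the paper adds the whole chain of intermediate rectangles $S_1,\dots,S_{N_1+N_2-1}$ so that the transition matrix is for $f_\mu$ itself; this is a standard adjustment and does not affect the argument.
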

      A  natural question is what happens if the tangency corresponds to a piece which is not responsible for the entropy. We prove that;

      \begin{maintheorem}  \label{t.nonresponsible1}
      Let $f_{\mu}$ be a one parameter family of $C^{2}$ surface
    diffeomorphism as above, then
    \begin{itemize}
    \item If $\Lambda_{i_0}$ is not responsible for the entropy, then $f_0$ is a constancy point of the topological entropy in the $C^{\infty}$ topology.
    \item $f_0$ is a point of constancy in $C^k-$topology ( $1 \leq k < \infty$) if $h_{top}(f) - h(f| \Lambda_{i_0}) > \alpha_k$ where $\alpha_k > 0$ is a constant depending on $f_0$.
    \end{itemize}
\end{maintheorem}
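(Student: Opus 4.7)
The plan is to decompose $\Omega(f_0)$ via isolating neighborhoods and reduce both statements to an upper semicontinuity estimate for the entropy of the maximal invariant set near the tangency. Choose pairwise disjoint isolating neighborhoods $U_0 \supset \Lambda_{i_0}\cup \mathcal{O}(q)$ and $U_j \supset \Lambda_j$ for $j\neq i_0$. For $g$ sufficiently $C^1$-close to $f_0$, each $\Lambda_j$ with $j\neq i_0$ admits a hyperbolic continuation $\tilde \Lambda_j$ topologically conjugate to $\Lambda_j$ (hence $h(g|\tilde \Lambda_j)=h(f_0|\Lambda_j)$), one has $\Omega(g)\subset U_0\cup \bigcup_{j\neq i_0} U_j$, and the maximal $g$-invariant set $K_g:=\bigcap_{n\in\Z} g^n(U_0)$ coincides for $g=f_0$ with $\Lambda_{i_0}\cup \mathcal{O}(q)$. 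Since $\mathcal{O}(q)$ is a single orbit accumulating on $\Lambda_{i_0}$, we have $h(f_0|K_{f_0})=h(f_0|\Lambda_{i_0})$. Let $\delta:=h(f_0)-h(f_0|\Lambda_{i_0})$; the non-responsibility hypothesis is precisely $\delta>0$, and it yields $h(f_0)=\max_{j\neq i_0}h(f_0|\Lambda_j)$.

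The lower bound $h(g)\ge h(f_0)$ is immediate from the continuations $\tilde \Lambda_j$. So both items of the theorem reduce to an upper bound of the form $h(g|K_g)\le h(f_0|\Lambda_{i_0})+\epsilon$ with $\epsilon<\delta$; combined with the identity $h(g)=\max\bigl\{h(g|K_g),\,\max_{j\neq i_0}h(f_0|\Lambda_j)\bigr\}$ this gives $h(g)=h(f_0)$.

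For the $C^\infty$ part I would invoke Yomdin's theorem, which yields vanishing of the tail (local) entropy $h^*(f_0)=0$, together with its Misiurewicz--Newhouse consequence that the entropy of the maximal invariant set in a fixed isolating neighborhood is upper semicontinuous in $g$. Applied to $U_0$ this gives $\limsup_{g\to f_0}h(g|K_g)\le h(f_0|K_{f_0})=h(f_0|\Lambda_{i_0})$. Shrinking the $C^\infty$-neighborhood so that the defect is strictly less than $\delta$ closes the $C^\infty$ case.

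For finite smoothness $k$, Yomdin's quantitative estimate yields a tail-entropy bound $h^*(f_0)\le \alpha_k$ of order $c(f_0)/k$, where $c(f_0)$ depends on the dimension and on the Lipschitz/volume-growth quantities of $f_0$; this is the constant $\alpha_k$ appearing in the statement. Under the hypothesis $\delta>\alpha_k$, the same decomposition argument yields $h(g|K_g)\le h(f_0|\Lambda_{i_0})+\alpha_k<h(f_0)$ for $g$ sufficiently $C^k$-close to $f_0$, and hence $h(g)=h(f_0)$. The main technical obstacle is to make the semicontinuity statement genuinely \emph{local}, i.e.\ to apply Yomdin's volume-growth bounds to the maximal invariant set $K_g\subset U_0$ rather than to the full manifold, and to ensure that the relevant volume-growth quantities depend upper semicontinuously on $g$ in the $C^k$-topology. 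This is what dictates how small $U_0$ and the $C^k$-neighborhood of $f_0$ must be chosen.
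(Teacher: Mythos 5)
Your proposal is correct and follows essentially the same route as the paper: a filtration/isolating-neighborhood decomposition of $\Omega(f_0)$, hyperbolic continuation for the pieces $\Lambda_j$, $j\neq i_0$, and the Newhouse--Yomdin upper semicontinuity of the entropy of the maximal invariant set in $U_0$ (with the $C^k$ defect bound $\alpha_k=\frac{2m}{k}R(f)$ of order $c(f_0)/k$). The ``technical obstacle'' you flag (localizing the semicontinuity to $K_g\subset U_0$ and controlling where $\Omega(g)$ lives) is exactly what the paper handles via its adaptation of Newhouse's and Yomdin's arguments to maximal invariant subsets together with Shub's filtration proposition.
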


So the above theorems assert that, in the $C^{\infty}$ topology, if the tangency is at the ``correct" place (the responsible basic set), then the entropy varies and, if the tangency is at the ``wrong" place, then the entropy remains constant. It is interesting to note that if we consider $C^1$-topology then even a tangency ``in a wrong place"  may cause entropy variation:

\begin{maintheorem}\label{t.nonresponsible}
    There exists a diffeomorphism $f$of $S^2$ fixing a saddle with homoclinic transversal intersection and another one with homoclinic tangency such that $f$ {\bf is} a point of entropy variation in $C^1$-topology. \end{maintheorem}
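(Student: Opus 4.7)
\emph{Plan.} I would construct $f$ explicitly, as a $C^{\infty}$ diffeomorphism of $S^2$ with two dynamically independent regions. Fix disjoint invariant topological disks $D_1, D_2 \subset S^2$. On $D_1$ install a Smale horseshoe $\Lambda_1$ containing a fixed saddle $p_1$ with transverse homoclinic intersections and entropy $h(f|_{\Lambda_1}) = \log 2$. On $D_2$ install a fixed saddle $p_2$ whose invariant manifolds present a single quadratic homoclinic tangency at a point $q$, with the remaining dynamics on $D_2$ and on $S^2 \setminus (D_1 \cup D_2)$ trivial (wandering, with sinks and sources). Then $h(f) = \log 2$, $\Lambda_1$ is the unique basic set responsible for the entropy, and the tangency sits in the ``wrong'' basic piece $\{p_2\}$, which matches the hypothesis. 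The goal is to show that for every $\eta > 0$ there exists $g$ with $\|g - f\|_{C^1} < \eta$ and $h(g) > \log 2$. Since the perturbation will be supported away from $D_1$, the horseshoe $\Lambda_1$ and its entropy $\log 2$ persist, reducing the problem to producing a new, richer horseshoe near $q$.

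\emph{The wiggle perturbation.} Near $q$, choose coordinates in which $W^s_{\mathrm{loc}}(p_2)$ is the $x$-axis and $W^u_{\mathrm{loc}}(p_2)$ is the parabola $y = x^2$ on the box $B_\delta = [-\delta,\delta]^2$. For $N \in \N$ and a smooth cutoff $\chi$ equal to $1$ on $B_{\delta/2}$ and vanishing outside $B_\delta$, define
\[
g(z) = f(z) + \bigl(0,\; 2\delta^{2}\,\chi(z)\cos(N\pi x/\delta)\bigr).
\]
A direct estimate gives $\|g - f\|_{C^{0}} = O(\delta^{2})$ and $\|D(g-f)\|_{C^{0}} = O(\delta N)$, so $\|g - f\|_{C^1} \leq C_{0}\,\delta N$; choosing $\delta = \eta/(C_{0} N)$ makes $g$ an $\eta$-perturbation in the $C^1$ topology. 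The support of $g - f$ stays away from $p_2$ and from $D_1$, so $p_2$ keeps its linearization, $\Lambda_{1}$ survives unchanged, and by continuous dependence of invariant manifolds in $C^1$ (combined with a localization argument along the orbit of $q$) the piece of $W^{u}_{g}(p_2)$ inside $B_{\delta/2}$ is a graph-perturbation of the parabola oscillating above and below $\{y=0\}$ with $N$ transverse crossings $q_{1},\dots,q_{N}$; each $q_i$ is a new transverse homoclinic point of $p_2$ for $g$.

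\emph{The new horseshoe.} By the Birkhoff-Smale theorem applied to the cluster $\{q_{1},\dots,q_{N}\}$, there exist an iterate $g^{T}$ and a compact hyperbolic $g^{T}$-invariant set $\Lambda_{g}$ topologically conjugate to the full shift on $N$ symbols. The return time $T$ is determined by the homoclinic loop based at $q$: the time needed for a thin vertical strip around $W^{s}_{\mathrm{loc}}(p_2)$ containing all $q_i$ to be stretched by $g^T$ across itself in Markov fashion. Since this loop and the linearization of $g$ near $p_2$ do not depend on $N$, one may take $T$ uniform in $N$. Therefore $h(g) \geq h(g|_{\Lambda_{g}}) = (\log N)/T$, and picking $N$ large enough that $(\log N)/T > \log 2$ yields $h(g) > h(f)$, proving $f$ is an entropy variation point in the $C^1$-topology.

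\emph{Main obstacle.} The delicate step is verifying the Markov property uniformly in $N$: one must show that $g^{T}$ sends a thin vertical strip containing every $q_{i}$ fully across every horizontal strip around each $q_{j}$. This is a standard $\lambda$-lemma/inclination estimate once $\delta$ is small enough, but it must be performed with explicit control so that neither $T$ nor the strip-geometry constants degenerate as $N$ grows. The construction is intrinsically $C^1$: the $C^{r}$-norm of the wiggle equals $\delta^{2-r} N^{r}$, which diverges for any $r \geq 2$ as $N \to \infty$ and $\delta = \eta/(C_{0}N) \to 0$. This is exactly why the same mechanism cannot produce variation in $C^{r}$ for $r \geq 2$, in harmony with Theorem~\ref{t.nonresponsible1}.
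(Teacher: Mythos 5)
Your global skeleton is the same as the paper's: a sphere diffeomorphism with a two--legged horseshoe of entropy $\log 2$ responsible for the entropy in one invariant region, a saddle with a homoclinic tangency in a disjoint region, and a snake--like $C^1$ perturbation supported near the tangency that creates a new horseshoe beating $\log 2$. The paper, however, simply invokes Newhouse's Theorem~\ref{minhoca}, whereas you try to prove the perturbation lemma by hand, and the hand--made version contains a genuine gap: the claim that the return time $T$ of the $N$--symbol horseshoe ``may be taken uniform in $N$'' is false. With the $C^1$ size of the wiggle held below $\eta$ you are forced to take $\delta\sim\eta/N$, so the $N$ new homoclinic points $q_1,\dots,q_N$ lie within $O(\delta^2)$ of $W^s_{\mathrm{loc}}(p_2)$. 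A Markov rectangle containing the whole cluster has height $\sim\delta^2$ in the direction transverse to $W^s_{\mathrm{loc}}(p_2)$, and such a rectangle needs $\sim 2\log(1/\delta)/\log\lambda(p_2)$ iterates near $p_2$ before it is expanded enough to return across itself. Hence $T\gtrsim c\log N$, and $(\log N)/T$ stays bounded (by roughly $\log\lambda(p_2)$, and by a naive graph--wiggle as yours even by $\tfrac12\log\lambda(p_2)$) instead of tending to infinity. If $T$ really were uniform in $N$, every homoclinic tangency would permit unbounded entropy growth under arbitrarily small $C^1$ perturbations, contradicting the precise bound $\tfrac{1}{\tau(p)}\log|\lambda(p)|-\varepsilon$ in Theorem~\ref{minhoca}, which is exactly the expression of this cap.

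Because of this cap, the construction only works if the saddle carrying the tangency has a large enough eigenvalue, and your proposal never specifies the eigenvalues of $p_2$. This is the one piece of data the paper does fix: it takes $\lambda(p)=3$, so that the Newhouse bound $\log 3-\varepsilon$ exceeds the horseshoe's $\log 2$. To repair your argument, either cite Theorem~\ref{minhoca} directly (as the paper does) after imposing $\lambda(p_2)>2$, or, if you insist on the explicit wiggle, track how $T$ grows with $N$ and verify that the limiting ratio $(\log N)/T$ still exceeds $\log 2$ for your choice of $\lambda(p_2)$; as written, the conclusion $h(g)>\log 2$ does not follow.
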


By the existence of transverse homoclinic point in the example of above theorem the entropy of system is positive. Moreover the entropy is localized on the part of non wandering set far from homoclinic tangency. However, $C^1$-perturbations make the total entropy of the system increase.


We recall  a method for perturbation of surface dynamics with
homoclinic tangency, due to Newhouse, which is so called the
``Snake like" perturbation. Although after such perturbation the
non wandering set becomes richer, the topological entropy does not
necessarily increase. See theorem \ref{minhoca} for the relation
between an estimate of entropy after the perturbation and the
eigenvalues of the periodic point corresponding to the homoclinic
tangency.

\begin{theorem}[\cite{newhouseminhoca}]\label{minhoca}
    Let $p$ be a (conservative)  saddle point of a $C^1$-diffeomorphism
    $f$, such that $W^u(\mathcal{O}(p))$ is tangent to $W^s(\mathcal{O}
    (p))$ at some point. Given $\varepsilon>0$, for any
    neighborhood $\mathcal{N}$ of $f$ there exists $g\in\mathcal{N}$ such
    that
                $$
                    h(g) >
                    \frac{1}{\tau(p)}\log|\lambda(p)|-\varepsilon,
                $$
    where $\tau(p)$ is the period of $p$.
\end{theorem}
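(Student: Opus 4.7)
My plan is to exhibit a $C^1$-small perturbation $g$ of $f$ that admits a Smale horseshoe with $\asymp|\lambda(p)|^N$ branches for some large integer $N$, yielding the claimed entropy bound.

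\textbf{Reduction and local picture.} Replace $f$ by $F:=f^{\tau(p)}$, so $p$ becomes a fixed saddle of $F$ with unstable eigenvalue $\lambda:=\lambda(p)$. Since $h(F)=\tau(p)h(f)$, it suffices to produce $G$ arbitrarily $C^1$-close to $F$ with $h(G)>\log|\lambda|-\tau(p)\varepsilon$; because $\mathcal{O}(q)$ is bounded away from $\mathcal{O}(p)$, such a $G$ can be realized as $g^{\tau(p)}$ for a $C^1$-small perturbation $g$ of $f$ supported near a single point of $\mathcal{O}(q)$. In adapted coordinates around $p$ the map $F$ is essentially the linear saddle $(x,y)\mapsto(\lambda x,\sigma y)$ with $|\lambda|>1>|\sigma|$; let $R$ be a small coordinate box centered at $p$. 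Since $F^{\pm k}q\to p$, for any large $N$ we have $F^{-N}(q)\in R$ close to the unstable axis and $F^{N}(q)\in R$ close to the stable axis.

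\textbf{Snake perturbation.} Consider a thin horizontal sub-rectangle $S\subset R$ of height $\asymp|\sigma|^N$ straddling $W^u_{\mathrm{loc}}(p)$. Its image $F^N(S)$ is a strip of length $\asymp|\lambda|^N$ that follows $W^u(p)$ and passes through a small neighborhood $V$ of the tangency point $q$; because the tangency is quadratic, this strip looks inside $V$ like a parabola grazing $W^s(p)$ and then returning toward $R$. Newhouse's snake perturbation is a $C^1$-small bump supported in $V$ that forces the returning strip to fold back and forth transversally across $R$ approximately $k\asymp|\lambda|^N$ times. Since the total length of $F^N(S)$ is $\asymp|\lambda|^N$ and the width of $R$ is $O(1)$, the deflection needed at each fold has amplitude $O(|\sigma|^N)$, hence is $C^1$-small as $N\to\infty$.

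\textbf{Horseshoe and conclusion.} The resulting map $G$ then admits in $R$ a $k$-branch Smale horseshoe for the iterate $G^N$: there are $k$ disjoint vertical sub-rectangles of $R$ whose images under $G^N$ are horizontal sub-rectangles crossing $R$ in the unstable direction. Standard symbolic coding yields $h(G^N)\ge\log k$, whence
\begin{equation*}
h(G)\ge\frac{\log k}{N}=\log|\lambda|-\frac{O(1)}{N}.
\end{equation*}
Choosing $N$ large enough that $O(1)/N<\tau(p)\varepsilon$ and descending back to $f$ gives
\begin{equation*}
h(g)=\frac{h(G)}{\tau(p)}>\frac{\log|\lambda|}{\tau(p)}-\varepsilon.
\end{equation*}

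\textbf{Main obstacle.} The delicate step is justifying that the snake perturbation can be taken $C^1$-small while still producing $\asymp|\lambda|^N$ genuine transverse crossings: one has to balance the unstable expansion $|\lambda|^N$ against the stable contraction $|\sigma|^N$ and exploit the quadratic geometry of the tangency. The conservative hypothesis, which enforces $|\lambda\sigma|=1$, is exactly what makes the estimate for the required perturbation come out as $O(|\sigma|^N)\to 0$ in a clean way; in the general dissipative case one has to work harder to get a comparable count of branches.
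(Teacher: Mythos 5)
This statement is not proved in the paper at all: Theorem \ref{minhoca} is imported verbatim from Newhouse's Ast\'erisque paper \cite{newhouseminhoca} and used as a black box, so there is no internal proof to compare your argument against. Judged on its own merits, your sketch correctly identifies the mechanism (a snake perturbation at the tangency producing a many--branched horseshoe, with entropy estimated as $\frac{1}{M}\log(\#\text{branches})$ for the return time $M$), and the reduction to $F=f^{\tau(p)}$ via $h(f^{n})=nh(f)$ is fine.

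However, the step you yourself flag as ``the main obstacle'' contains a genuine gap, and the way you dismiss it is quantitatively wrong. $C^1$-smallness of a bump is governed by amplitude divided by wavelength, not by amplitude alone. If you insert $k\asymp|\lambda|^{N}$ folds inside a \emph{fixed} neighborhood $V$ of the tangency, the wavelength is at most $O(|\lambda|^{-N})$, so folds of amplitude $O(|\sigma|^{N})$ give a perturbation of $C^1$-size $\asymp|\sigma|^{N}/|\lambda|^{-N}=|\sigma\lambda|^{N}=O(1)$ in the conservative case --- bounded, not small (and if the folds are confined to the $O(|\sigma|^{N/2})$--wide window where the quadratic parabola is actually within reach of $W^s(p)$, the $C^1$-size even blows up). There is a second, linked accounting problem: a fold that penetrates only to depth $|\sigma|^{N}$ past $W^s(p)$ needs roughly $N$ \emph{further} iterates before it stretches fully across $R$, so the return time of your horseshoe is $\sim 2N$ rather than $N$, and the naive estimate collapses to $h(G)\gtrsim\frac{1}{2}\log|\lambda|$. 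Every consistent version of the one--shot snake I can extract from your sketch loses this factor of $2$ (it is forced by the quadratic geometry of the tangency: only a $\sqrt{a}$--window of the parabola can be pushed through $W^s$ with amplitude $a$). Reaching the sharp constant $\log|\lambda|$ with arbitrarily $C^1$-small perturbations is precisely the nontrivial content of Newhouse's theorem; it cannot be obtained from a single snake with the parameters you propose, and requires the finer (iterative/dimension-theoretic) analysis of \cite{newhouseminhoca}. So the proposal conveys the right picture but does not constitute a proof.
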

In the above theorem $\det(Df(p)) = 1$. However, if this is not the case, $|\lambda(p)|$ in the above theorem can be substitutes by $\min\{\lambda(p), \mu(p)^{-1}\}$ where $\lambda(p) > 1 > \mu(p)$ are the eigenvalues of $Df(p).$ 
As a corollary of continuity of topological entropy for the surface $C^{\infty}$ diffeomorphisms we conclude that:

\begin{maintheorem}\label{non.newhouse}
It is not possible to substitute $C^{\infty}$ instead of $C^{1}$ in the above theorem.
\end{maintheorem}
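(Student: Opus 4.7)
The plan is to exhibit a $C^{\infty}$ surface diffeomorphism $f$ that satisfies the hypotheses of Theorem \ref{minhoca} (a conservative saddle with a homoclinic tangency) but whose topological entropy is strictly smaller than $\tfrac{1}{\tau(p)}\log|\lambda(p)|$, and then invoke the upper semicontinuity of topological entropy in the $C^{\infty}$ topology (Yomdin) to rule out the Newhouse-type conclusion inside some $C^{\infty}$-neighborhood of $f$.

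First I would construct such an $f$. Begin with a Morse-Smale diffeomorphism of $S^{2}$ whose non-wandering set contains a hyperbolic saddle fixed point $p$ with real eigenvalues $\lambda,\lambda^{-1}$, $|\lambda|>1$, so that $\det Df(p)=1$. By a $C^{\infty}$-small perturbation supported in a small disk where the separatrices $W^{u}(p)$ and $W^{s}(p)$ come close, one creates a single homoclinic tangency orbit $\mathcal{O}(q)$ at $p$ without altering the germ of $f$ at $p$ nor the hyperbolic behaviour on the other Morse-Smale pieces. The non-wandering set of the resulting $f$ is then the original finite list of hyperbolic periodic orbits together with $\mathcal{O}(q)$, and Smale's spectral decomposition yields $h(f)=0$.

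Second, I invoke upper semicontinuity: by Yomdin's theorem, the map $h:\Diff^{\infty}(M)\to\R$ is upper semicontinuous. Fix $\varepsilon_0$ with $0<\varepsilon_0<\tfrac{1}{\tau(p)}\log|\lambda(p)|$. Since $h(f)=0$, there exists a $C^{\infty}$-neighborhood $\mathcal{N}$ of $f$ such that every $g\in\mathcal{N}$ satisfies $h(g)<\tfrac{1}{\tau(p)}\log|\lambda(p)|-\varepsilon_0$. This directly contradicts any putative $C^{\infty}$-version of Theorem \ref{minhoca} applied to $f$ with this $\varepsilon_0$, so such an upgrade is impossible.

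The main obstacle is the auxiliary construction: verifying that one can realize a conservative saddle with prescribed eigenvalues $\lambda,\lambda^{-1}$ as part of a $C^{\infty}$ surface diffeomorphism whose entire dynamics has zero topological entropy while still exhibiting a homoclinic tangency orbit at $p$. Once this bookkeeping on $\Omega(f)$ is complete, the remaining steps are essentially cost-free: $h(f)=0$ gives ample slack below the target bound $\tfrac{1}{\tau(p)}\log|\lambda(p)|$, and Yomdin's upper semicontinuity finishes the proof.
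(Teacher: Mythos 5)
Your argument is correct and is essentially the paper's: the paper likewise deduces the theorem from the existence of a single surface diffeomorphism with a homoclinic tangency at a saddle $p$ but with $h(f)<\frac{1}{\tau(p)}\log|\lambda(p)|$, combined with continuity (in fact only the upper semicontinuity you invoke is needed) of topological entropy in the $C^{\infty}$ topology on surfaces. The only difference is the choice of witness: the paper reuses the $S^2$ example from Theorem \ref{t.nonresponsible}, where $h(f)=\log 2$ while $\frac{1}{\tau(p)}\log|\lambda(p)|=\log 3$, whereas you build a fresh zero-entropy example at the boundary of Morse--Smale systems; both work, and the Palis--Takens-type bookkeeping you flag (that the tangency creates no new nonwandering dynamics) is standard and is exactly the setting the rest of the paper assumes.
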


The continuity property of entropy is a challenging problem in smooth ergodic theory. Newhouse \cite{newhousecontinuity} and Yomdin \cite{yomdin} results give semi-continuity of entropy in $C^{\infty}$ topology and using a Katok's result \cite{katok.artigo} the entropy is continuous for $C^{\infty}$ surface diffeomorphisms.

Let us also mention a result of Hua, Saghin and Xia
\cite{huasaghinxia} where they prove that the topological entropy is
locally constant for some partially hyperbolic diffeomorphisms with
one dimensional central bundle. They have shown that for a large class of partially hyperbolic diffeomorphisms with bi-dimensional central foliations, the entropy varies continuously. The authors also claim that without the homological conditions the result is not true, and they exhibit examples in higher dimension ($\ge 4$) where the result fails without such hypothesis. So it arises a natural question about the continuity of the topological entropy in the $C^{\infty}-$ topology for partially hyperbolic systems defined on 3-dimensional manifolds.

Here we give an example of $3-$dimensional manifold diffeomorphism which is not the point of continuity of entropy in $C^{\infty}$-topology. However, this example is far from being partially hyperbolic.

\begin{maintheorem} \label{examplediscontinuity}
    There exists a diffeomorphism on $3$-dimensional ball which is a discontinuity point of the topological entropy in the $C^{\infty}$-topology.
\end{maintheorem}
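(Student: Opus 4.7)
The strategy is to leverage the $C^1$-discontinuity example produced in Theorem \ref{t.nonresponsible} on the surface $S^2$, and to circumvent the Katok--Newhouse--Yomdin continuity of entropy (which rules out a $C^\infty$-discontinuity in the surface setting) by exploiting the extra dimension of the $3$-ball. The plan is to construct a $C^\infty$ diffeomorphism $F : B^3 \to B^3$ whose non-wandering set is the union of an invariant 2-disk $\Sigma \subset B^3$, carrying a copy of the surface model of Theorem \ref{t.nonresponsible}, together with one attracting fixed point off $\Sigma$. Concretely, $F|_\Sigma$ should have a basic hyperbolic set $\Lambda$ responsible for the entropy and a saddle $p \not\in \Lambda$ with a homoclinic tangency whose eigenvalue is strictly smaller than $e^{h(F|_\Sigma)}$; normal to $\Sigma$, $F$ contracts strongly, so that $\Sigma$ is a normally hyperbolic attractor and $h(F) = h(F|_\Sigma)$.

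The key step is a \emph{thickened snake perturbation}: take the $C^1$-small Newhouse snake perturbation of $F|_\Sigma$ near the tangency orbit of $p$ and extend it to $B^3$ by modulating with a bump function in the normal direction. Because the normal scale of the bump may be chosen independently of the amplitude and wavelength of the planar snake, the resulting three-dimensional perturbation $F_\varepsilon$ can be made $C^\infty$-close to $F$ in every $C^r$-norm simultaneously, while still producing a three-dimensional horseshoe near $\mathcal{O}(p)$. Applying Theorem \ref{minhoca} (or its $3$D analogue) gives a horseshoe of entropy at least $\frac{1}{\tau(p)}\log\min\{|\lambda(p)|,|\mu(p)|^{-1}\} - \varepsilon$. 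By tuning the initial model so that this quantity strictly exceeds $h(F|_\Sigma)$, one obtains a definite jump $h(F_\varepsilon) \ge h(F) + \delta$ for some $\delta > 0$ independent of $\varepsilon$, contradicting continuity at $F$.

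The main obstacle, and the reason the analogous surface construction fails, is the $C^\infty$ control of the snake: in dimension two the amplitude and wavelength of a snake perturbation are forced into the same plane, so the high-order derivatives blow up and Yomdin's theorem reabsorbs the entropy gain. The gain from working in $B^3$ is precisely that the transverse direction decouples these scales: spreading the snake transversally allows all $C^r$-norms to be driven to zero jointly while the symbolic horseshoe producing the entropy lower bound persists. Making this dimensional accounting rigorous, namely matching the normal width of the bump to the snake wavelength in such a way that every $C^r$-norm vanishes yet the Markov structure of the three-dimensional horseshoe survives with entropy bounded below by a fixed constant, is the technical heart of the argument.
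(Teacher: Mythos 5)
Your approach has a fatal directional error: you are trying to produce a discontinuity by making the entropy \emph{jump up} under arbitrarily $C^{\infty}$-small perturbations, i.e.\ you are trying to violate \emph{upper} semicontinuity. But the Newhouse--Yomdin upper semicontinuity of topological entropy in the $C^{\infty}$ topology is dimension-independent: for any compact manifold and any $C^{\infty}$ diffeomorphism $f$, Newhouse's bound $h(g)\le r(\epsilon,g)+h_{loc}(\epsilon,g)$ together with Yomdin's control of local (tail) entropy for $C^{\infty}$ maps forces $\limsup_{g\to f}h(g)\le h(f)$. So no ``thickened snake'' can yield $h(F_{\varepsilon})\ge h(F)+\delta$ with $\delta>0$ fixed and $F_{\varepsilon}\to F$ in $C^{\infty}$: the proposed decoupling of amplitude and wavelength via the transverse direction does not evade Yomdin's theorem, whose obstruction is volume growth of iterated images of smooth discs, not a purely two-dimensional phenomenon. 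What genuinely fails in dimension $3$ (and is recovered in dimension $2$ by Katok's horseshoe approximation theorem) is \emph{lower} semicontinuity, and that is the direction one must attack.

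The paper's construction does exactly this. One takes a $C^{\infty}$ Smale horseshoe $f$ on the $2$-disc, isotopes it through maps $f_t$ to a contraction as $|t|\to 1$, and stacks the rescaled maps $\tilde f_t$ on the horizontal slices $\mathbb{D}^2_t$ of the $3$-ball to obtain $F$ with an invariant middle slice $\mathbb{D}^2_0$ on which $F$ restricts to the horseshoe; hence $h(F)\ge\log 2$. Composing $F$ with the time-$\tau$ map $\phi_{\tau}$ of a north--south flow that pushes every horizontal slice strictly downward destroys all recurrence except at the two poles, so $\Omega(G_{\tau})=\{N,S\}$ and $h(G_{\tau})=0$ for every $\tau>0$, while $G_{\tau}\to G_0=F$ in the $C^{\infty}$ topology. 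The entropy therefore drops from $\log 2$ to $0$, exhibiting the failure of lower semicontinuity. If you want to salvage your write-up, you should discard the snake mechanism entirely and rebuild the argument around an invariant surface whose entropy-carrying dynamics can be ``unhooked'' by an arbitrarily small push in the normal direction.
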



\section{Main Ingredients}

\subsection*{Topological Entropy}\label{sect.top.entropy}

Consider $\Sigma_N=\{1,\dots,N\}^{\mathbb{Z}}$ and the \emph{shift}
$\sigma:\Sigma_N\to\Sigma_N$  given by
$\sigma(\mathbf{x})=\mathbf{y}$ where $y_i=x_{i+1}$, $i\in \Z$.
For $A=(a_{ij})_{i,j=1}^{N}$ a square $0-1$-matrix of order $N$, the correspondent
\emph{subshift of finite type} is the restriction of $\sigma$ to
$\Sigma_A = \{ \mathbf{x}\in \Sigma_N \mid a_{x_i x_{i+1}}=1
\,\,\, \text{ for} \,\,\, i\in\Z\}$.

It is  well known (see \cite{robinson})   that  if $\sigma|_{A}:\Sigma_A\to \Sigma_A$ is a  subshift of finite type, then
        $$
            h(\sigma_A) = \log (\lambda_{max}),
        $$
        where $\lambda_{max}$ is the biggest eigenvalue of $A$ in
        modulus.



\subsection*{$\Omega$-Homoclinic Explosions and Markov Partitions}

One of the important features of hyperbolic dynamics is the existence of Markov partitions.
 with rectangles of arbitrarily small
    diameter.

In particular, for basic set from the spectral decomposition of a
Axiom A diffeomorphism there are Markov partitions with
arbitrarily small rectangles and this implies conjugacy between $f|_{\Lambda}$ and a
subshift of finite type $\sigma|_{\Sigma_A}$.

To prove our first theorem, we focus on $\Omega-$explosion like in
the model in Palis-Takens result \cite{palistakens}, of course,
without any hypothesis on the fractal dimensions.

That is, we are considering a one parameter family $f_{\mu}$ where for the parameter $\mu=0$ the
nonwandering set $\Omega(f_0) = \Lambda_1 \cup  \Lambda_2 \cup \cdots \cup \tilde{\Lambda}_{i_0} \cup \dots \cup \Lambda_k$
such that $ \Lambda_i, i \neq i_0$ is a hyperbolic basic and
$\tilde{ \Lambda}_{i_0} =  \Lambda_{i_0} \cup \mathcal{O}(q)$
 where  $ \Lambda_k$ is a basic set and  $\mathcal{O}(q)$ is the orbit of a homoclinic tangency
associated with a saddle fixed point $p \in  \Lambda_k.$

For $\mu>0$ we can consider the basic sets $ \Lambda_i(\mu)$ as the
continuation of $\Lambda_i$. Thereby, we have  that
$\Lambda_i(\mu)$ is hyperbolic and $f_{\mu}|_{\Lambda_{i}(\mu)}$
is conjugated to $f_0|_{\Lambda_i}$. Then, we have
    $$
        h(f_{\mu}|_{\Lambda_{i}(\mu)}) = h(f_0|_{\Lambda_i})
    $$
for all $i=1,\dots,k$ and all $\mu$ positive or negative.

However, when we unfold the family $f_{\mu}$ new periodic points are created and the entropy of the
nonwandering sets may increase for positive parameters $\mu$. We
will see that, in fact, the entropy increases for small positive
parameters.
This can be shown by constructing a subsystem of $f_{\mu}$ with a dynamics richer than  $f_0|_{\Lambda_{i_0}}$.

To construct such a subsystem, we find a subset of $\Omega(f_{\mu})$
containing $\Lambda_{i_0}(\mu)$ using Markov partitions.
Take a parameter $\mu$ very close to $\mu=0$. Since $f_{\mu}$
unfolds generically, the map $f_{\mu}$ has transversal homoclinic
intersections close to $\mathcal{O}(q_0)$, the tangency orbit of
$f_0$. We have the situation represented below in the figure
\ref{figura1}.


\begin{figure}[htb!]
  \centering
  \psfrag{A}{\footnotesize{$p$}}
  \psfrag{B}{\footnotesize{$q$}}
  \psfrag{C}{\footnotesize{$W^s(p)$}}
  \psfrag{D}{\footnotesize{$W^u(p)$}}
  \psfrag{E}{\footnotesize{$p_{\mu}$}}
  \psfrag{F}{\footnotesize{$q_{\mu}$}}
  \psfrag{G}{\footnotesize{$W^s(p_{\mu})$}}
  \psfrag{H}{\footnotesize{$W^u(p_{\mu})$}}
  \includegraphics[width=14cm]{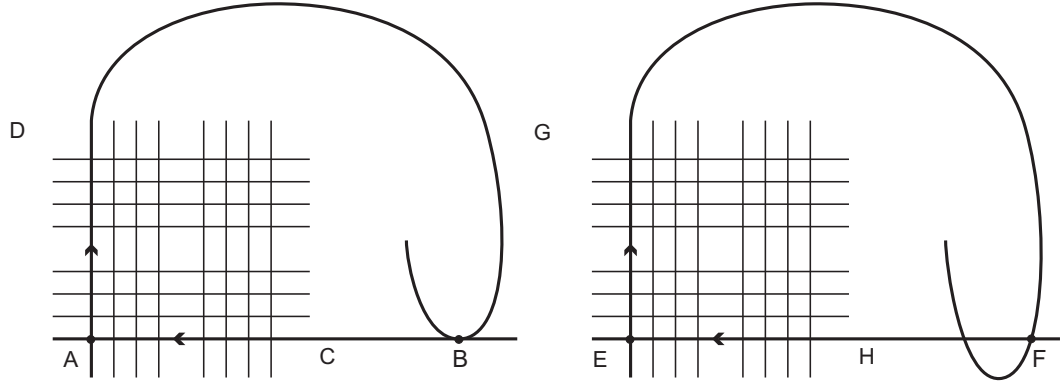}\\
  \caption{Unfolding of a homoclinic tangency close to $\mu=0$.}
  \label{figura1}
 \end{figure}


Consider $q_{\mu}$ a transversal homoclinic intersection point
between $W^s(p_{\mu})$ and $W^u(p_{\mu})$ close to $q_0$ (the
tangency for $f_0$). Since $\Lambda_{{i_0}}(\mu)$ is hyperbolic and
maximal invariant set for $f_{\mu}$, there exist a isolating
neighborhood of $\Lambda_{{i_0}}(\mu)$, say $V_{{i_0}}$. Suppose that
$q_{\mu}\not\in V_{{i_0}}$. Moreover, we can use the Bowen's
construction of Markov partition \cite{bowen75}.  Consider
$\{R_1,\dots,R_s\}$ a Markov partition for $\Lambda_{{i_0}}(\mu)$ such
that
    $$
        \Lambda_{{i_0}}(\mu) = \bigcup_{j=1}^{s}R_j \subset V_{{i_0}}.
    $$
Furthermore, as $q_{\mu}\not\in V_{i_0}$ we have that a part of
$\mathcal{O}(q_{\mu})$ remains out of $V_{i_0}$. Take $N_1, N_2 \in
\mathbb{N}$ such that $f_{\mu}^{N_1}(q_{\mu})\in R_s$,
$f_{\mu}^{-N_2}(q_{\mu})\in R_1$ and
$f_{\mu}^j(q_{\mu})\not\in{\bigcup_{j=1}^{s}{R_j}}$ for
$j=-N_2+1,\dots,0,\dots,N_1-1$. In other words, $R_s$ is the
rectangle containing the first forward iterated of $q_{\mu}$ that
belongs to $V_{i_0}$, and $R_1$ is the rectangle containing the first
backward iterated of $q_{\mu}$ that belongs to $V_{i_0}$.


\begin{figure}[htb!]
  \centering
  \psfrag{E}{\footnotesize{$p_{\mu}$}}
  \psfrag{F}{\footnotesize{$q_{\mu}$}}
  \psfrag{G}{\tiny{$f_{\mu}(q_{\mu})$}}
  \psfrag{H}{\tiny{$f^{2}_{\mu}(q_{\mu})$}}
  \psfrag{I}{\tiny{$f^{N_1}_{\mu}(q_{\mu})$}}
  \psfrag{J}{\tiny{$f^{-1}_{\mu}(q_{\mu})$}}
  \psfrag{K}{\tiny{$f^{-2}_{\mu}(q_{\mu})$}}
  \psfrag{L}{\tiny{$f^{-N_2}_{\mu}(q_{\mu})$}}
  \psfrag{M}{\footnotesize{$R_1$}}
  \psfrag{N}{\footnotesize{$R_s$}}
  \psfrag{O}{\footnotesize{$C$}}
  \psfrag{P}{\footnotesize{$f^{N_1}_{\mu}(C)$}}
  \psfrag{Q}{\footnotesize{$f^{-N_2}_{\mu}(C)$}}
  \includegraphics[width=14cm]{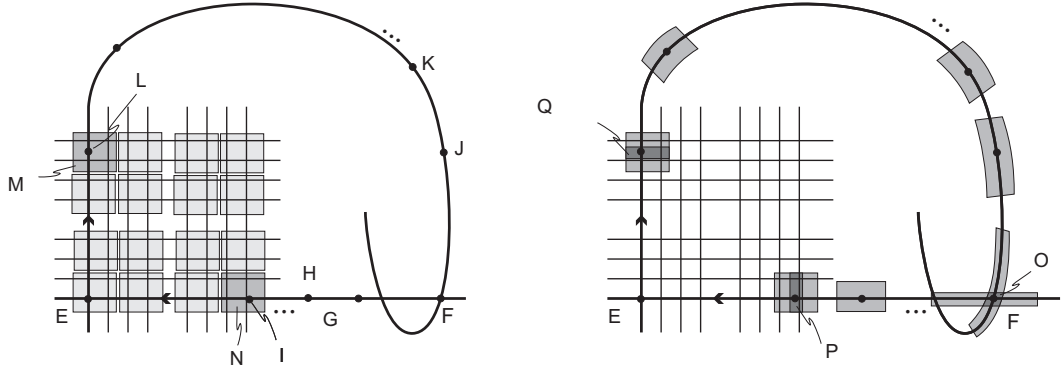}\\
  \caption{Construction of the Markov partition.}
  \label{figura2}
 \end{figure}


Given the Markov partition for $\Lambda_{i_0}(\mu)$, we extend
it for a larger set which contains
$\Lambda_{i_0}(\mu)\cup\mathcal{O}(q_{\mu})$ by constructing
other rectangles containing
$\{f_{\mu}^{-N_2 + 1}(q_{\mu}),\dots,q_{\mu},\dots,f_{\mu}^{N_1 + 1}(q_{\mu})\}$
in the following way: if we iterate $R_1$ under $f^{N_2}$, we get a
narrow strip around $W^u(p_{\mu})$ containing $q_{\mu}$. And if we
iterate $R_s$ under $f_{\mu}^{-N_1}$, we get a narrow strip around
$W^s(p_{\mu})$ containing $q_{\mu}$. We know that $W^s(p_{\mu})$ and
$W^u(p_{\mu})$ have transversal intersection on $q_{\mu}$. As we
could take the diameter of the partition small enough, it comes out
that $f_{\mu}^{-N_1}(R_s)$ and $ f_{\mu}^{N_2}(R_1)$ are   transversal.
Let $C:= f_{\mu}^{-N_1}(R_s) \cap f_{\mu}^{N_2}(R_1)$. It is clear that $C$
is disjoint from $\bigcup_{i=1}^s R_i$ and contains $q_{\mu}.$

Note that $f_{\mu}^{N_1}(C)$ is a vertical strip of full height
contained in $R_s$ and $f_{\mu}^{-N_2}(C)$ is a horizontal strip of
full weight contained in $R_1$. Consider the disjoint sets $S_i$
defined as
    $$
        S_j=f_{\mu}^{-N_2+j}(C)
    $$
for $j = 1,2,\dots,N_2,N_2+1,\dots,N_1+N_2-1.$ Note that
$S_{N_2}=C$. Now denote $\ell=N_1+N_2-1$ and consider
$\mathcal{P}=\{R_1,\dots,R_s,S_1,\dots,S_\ell\}$ and
    $$
        R=\bigcup_{i=1}^{s}R_i\cup\bigcup_{j=1}^{\ell}S_j.
    $$
So $\displaystyle{\Lambda_R = \bigcap_{n\in \Z}f_{\mu}^{n}(R)}$ is an
isolated hyperbolic set such that $\Lambda_{i_0}(\mu)\subset \Lambda_R \subset \Omega(f_ {\mu})$.
The desired subsystem is the restriction $f_{\mu}:\Lambda_R\to\Lambda_R$.

\begin{lemma}
     $\mathcal{P}$ is Markov partition for $\Lambda_R$.
\end{lemma}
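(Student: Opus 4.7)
The plan is to verify the three defining properties of a Markov partition for the locally maximal hyperbolic set $\Lambda_R$: (a) the elements of $\mathcal{P}$ are rectangles, (b) they have pairwise disjoint interiors and cover $\Lambda_R$, and (c) the Markov property holds, namely for every $x$ in the interior of some $P \in \mathcal{P}$ with $f_\mu(x)$ in the interior of some $P' \in \mathcal{P}$, one has $f_\mu(W^s(x,P)) \subset W^s(f_\mu(x),P')$ and $f_\mu^{-1}(W^u(f_\mu(x),P')) \subset W^u(x,P)$.

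First I would address the rectangle structure. The $R_i$ are rectangles by hypothesis. For the $S_j$, the key observation is that $C = f_\mu^{-N_1}(R_s) \cap f_\mu^{N_2}(R_1)$ is the transverse intersection of a thin strip along $W^s(p_\mu)$ (the set $f_\mu^{-N_1}(R_s)$, which follows $W^s$ because $R_s$ is a rectangle at $p_\mu$ and backward iterates contract along unstables) and a thin strip along $W^u(p_\mu)$ (the set $f_\mu^{N_2}(R_1)$). Since $W^s(p_\mu) \pitchfork W^u(p_\mu)$ at $q_\mu$, for a sufficiently fine original Markov partition of $\Lambda_{i_0}(\mu)$ the set $C$ is a genuine rectangle (local product neighborhood of $q_\mu$ inside $\Lambda_R$), and each $S_j = f_\mu^{-N_2+j}(C)$ is a rectangle as a diffeomorphic hyperbolic image of $C$.

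Second, for disjointness and covering, the $R_i$ are pairwise disjoint by the original Markov property, and are disjoint from every $S_j$ because each $S_j$ contains $f_\mu^{j-N_2}(q_\mu)$, which by the choice of $N_1, N_2$ lies outside $V_{i_0} \supset \bigcup_i R_i$. The $S_j$ among themselves are disjoint because they are distinct orbit segments of the rectangle $C$, and by shrinking the diameter of the original Markov partition we may also shrink $C$ until these finitely many iterates have pairwise disjoint closures. Covering of $\Lambda_R$ follows from the construction: any point in $\Lambda_R$ either never leaves $V_{i_0}$ (and so lies in some $R_i$ by the Markov property for $\Lambda_{i_0}(\mu)$) or leaves $V_{i_0}$ and must then, by the isolation of $\Lambda_R$ and the choice of the strips around $\mathcal{O}(q_\mu)$, pass through the $S_j$.

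Third, the Markov property splits into four cases. Transitions $R_i \to R_j$ among the old rectangles are inherited. Transitions $S_j \to S_{j+1}$ for $1 \le j < \ell$ are immediate because $f_\mu(S_j) = S_{j+1}$ as a whole, so stable and unstable fibers map onto each other. The transition $S_\ell \to R_s$ is Markov because $f_\mu(S_\ell) = f_\mu^{N_1}(C)$ is, by construction, a vertical strip of full height inside $R_s$; hence unstable fibers of $S_\ell$ map onto unstable fibers of $R_s$, and stable fibers of $R_s$ pull back into stable fibers of $S_\ell$. Symmetrically, $R_1 \to S_1$ is Markov because $f_\mu^{-N_2}(C) \subset R_1$ is a horizontal strip of full width, so $f_\mu$ sends a full unstable fiber of $R_1 \cap f_\mu^{-1}(S_1)$ across $S_1$ in the unstable direction.

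\textbf{Main obstacle.} The delicate point is the first one: showing that $C$ really is a product rectangle and, more importantly, that the diameter of the original Bowen partition for $\Lambda_{i_0}(\mu)$ can be chosen small enough that (i) the thin strips $f_\mu^{-N_1}(R_s)$ and $f_\mu^{N_2}(R_1)$ are close enough to $W^s(p_\mu)$ and $W^u(p_\mu)$ respectively for their intersection to be a single transverse rectangle near $q_\mu$, and (ii) the finitely many iterates $S_1,\dots,S_\ell$ remain mutually disjoint and disjoint from $\bigcup R_i$. Both points use that $N_1, N_2$ are fixed once $q_\mu$ is chosen, so the required smallness is a finite condition that can be arranged by passing to a refinement of the initial Markov partition.
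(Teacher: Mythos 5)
Your proposal is correct and follows essentially the same route as the paper: the paper's proof likewise reduces to checking the transitions $S_j \to S_{j+1}$, $S_\ell \to R_s$ (a vertical strip of full height) and $R_1 \to S_1$ (a horizontal strip of full width), with the rectangle structure of $C$ and the disjointness arranged beforehand by taking the original partition sufficiently fine. Your treatment is in fact somewhat more complete, since you also spell out the covering of $\Lambda_R$ and the transversality of $f_\mu^{-N_1}(R_s)$ with $f_\mu^{N_2}(R_1)$, which the paper handles in the construction preceding the lemma rather than in the proof itself.
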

\begin{proof}
    We already know that all $R_i$'s satisfy the Markov property. It remains to verify the Markov  property for $S_j$'s. By
    construction we have all $R_i$'s and $S_j$'s pairwise disjoint.
    Furthermore $f_{\mu}(S_j)=S_{j+1}$ for $j=1,\dots,\ell-1$.
    In particular, we have $f_{\mu}(S_{\ell})\subset R_s$ is a vertical
    strip of full height. So
        \begin{align*}
            & f_{\mu}(S_{\ell})\cap R_s\neq\varnothing,\\
            & f_{\mu}(S_{\ell})\cap
            R_i = \varnothing, \mbox{ for } i=1,\dots,s \mbox{ and
            }\\
            & f_{\mu}(S_{\ell})\cap S_j = \varnothing, \mbox{ for }
            j=1,\dots,\ell.
        \end{align*}

    On the other hand, only $R_1$ has image by $f_{\mu}$ that intersects some
    $S_j$. In fact,
        $$
            f_{\mu}(R_1)\cap S_1\neq \varnothing \mbox{ and }
            f_{\mu}(R_1)\cap S_j =\varnothing, \mbox{ for } j=2,\dots,\ell.
        $$
    Note that since $f_{\mu}^{-1}(S_1)\subset R_1$ is a horizontal strip
    of full weight in $R_1$, then $S_1=f_{\mu}(f_{\mu}^{-1}(S_1))\subset
    f_{\mu}(R_1)$. So, $f_{\mu}(R_1)\cap S_1\neq\varnothing$ and, by the
    construction of $S_1$, this intersection satisfies the transversality condition
    of Markov partitions. Thus $\mathcal{P}$ is a Markov partition
    for $\Lambda_R$.
\end{proof}

We can associate to $f_{\mu}:\Lambda_R\to\Lambda_R$  a
subshift of finite type as follows. We consider the Markov partition
$\mathcal{P}=\{P_1,\dots,P_{s+\ell}\}$ as above and we define a
transition matrix $A_{\mu}=(a_{ij})_{(s+\ell)\times(s+\ell)}$ for
$f_{\mu}$ taking
    $$
    a_{ij}=
        \left\{
        \begin{array}{ll}
            1, & \hbox{if $f_{\mu}(P_i)\cap P_j\neq \varnothing$;} \\
            0, & \hbox{if $f_{\mu}(P_i)\cap P_j = \varnothing$}
        \end{array}
        \right.
    $$
for $i,j\in\{1,\dots,s+\ell\}$. In this way we obtain a topological conjugacy
between the systems $f_{\mu}:\Lambda_R\to\Lambda_R$ and the
subshift of the finite type
$\sigma_{A_{\mu}}:\Sigma_{A_{\mu}}\to\Sigma_{A_{\mu}}$, where
$\Sigma_{A_{\mu}}\subset\Sigma_{s+\ell}$. The transition matrix
$A_{\mu}$ has the following form
    $$
        a_{ij}=
            \left\{
                \begin{array}{ll}
                    H_{ij}, & \hbox{if $1\le i,j\le s$;} \\
                    1, & \hbox{if $i=1,j=s+1$ or $i=s+\ell, j=s$;} \\
                    1, & \hbox{if $j=i+1$ for $s+1\le i\le s+\ell-1$;} \\
                    0, & \hbox{in other cases.}
                \end{array}
            \right.
    $$
where $H_{\mu}=(H_{ij})_{s\times s}$ is the transition matrix of
$f_{\mu}:\Lambda_{i_0}(\mu)\to\Lambda_{i_0}(\mu)$ which is
irreducible, because $f_{\mu}|_{\Lambda_{i_0}(\mu)}$ is topologically
transitive (see the next section).


\section{Proof of Theorem \ref{t.responsible}}

Let $f_{\mu}$ be the one parameter family as in the theorem \ref{t.responsible}.
In the previous section we constructed a Markov partition for the subsystem
$f_{\mu}|_{\Lambda_R}$, for $\mu\ge 0$. By means of this Markov partition, one may
give a conjugacy between such invariant subsystem $f_{\mu}|_{\Lambda_{i_0}(\mu)}$ and the dynamics
of a subshift of finite type.

Let $A_{\mu}$ be the transition matrix of $f_{\mu}|_{\Lambda_R}$, for $\mu>0$
small enough. Recall that $h(f_{\mu}) = \log
\lambda_{\mu}$ where $\lambda_{\mu}$ is the largest eigenvalue of
$A_{\mu}$. By construction of Markov partition in the previous
section, we conclude that

\begin{equation}
    A_{\mu} =
    \left(
    \begin{array}{cc}
        \left[
          \begin{array}{c}
            \begin{minipage}[c]{2.2cm}
                \vspace{0.8cm}
                    \begin{center}
                        $H_{\mu}$
                    \end{center}
                \vspace{0.6cm}
            \end{minipage}
          \end{array}
        \right]
  &
        \left[
          \begin{array}{cccccc}
            1 & 0 & 0 &  \cdots & 0 \\
            0 & 0 & 0 &  \cdots & 0 \\
            \vdots & \vdots & \vdots & \ddots & \vdots \\
            0 & 0 & 0 & \cdots & 0 \\
          \end{array}
        \right]
  \\
  &
  \\
        \left[
          \begin{array}{cccc}
            0 & 0 & \cdots & 0 \\
            0 & 0 & \cdots & 0 \\
            \vdots & \vdots & \ddots & \vdots \\
            0 & 0 & \cdots & 0 \\
            0 & 0 & \cdots & 1 \\
          \end{array}
        \right]
  &
        \left[
          \begin{array}{ccccc}
            0 & 1 & 0 & \cdots & 0 \\
            0 & 0 & 1 & \cdots & 0 \\
            \vdots & \vdots & \vdots & \ddots & \vdots \\
            0 & 0 & 0 & \cdots & 1 \\
            0 & 0 & 0 & \cdots & 0 \\
          \end{array}
        \right]
    \end{array}
    \right).
\label{matrix.A}
\end{equation}

The following proposition asserts that the largest eigenvalue of the matrix $A_{\mu}$ is strictly
bigger than the largest eigenvalue of the matrix $A_0=H_0$. From this proposition we can conclude
that the entropy of the system $f_{\mu}|_{\Lambda_{i_0}(\mu)}$ is bigger than the entropy of $f_{0}|_{\Lambda_{i_0}}$.

\begin{proposition}\label{t.lambda}
    Let $A_{\mu}$ as defined above. If $\lambda_{\mu}$ is the largest eingenvalue of $A_{\mu}$ in modulus,
    then for any $\mu>0$ near to zero, $\lambda_{\mu}>\lambda_{0}$.
\end{proposition}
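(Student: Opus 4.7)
The plan is to exploit the sparse block structure of $A_\mu$ displayed in \eqref{matrix.A} to reduce the Perron eigenvalue problem for $A_\mu$ to a scalar characteristic equation involving only $H_\mu$, and then to compare against $\lambda_0$ by a monotonicity and asymptotics argument.

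First I would write a Perron eigenvector of $A_\mu$ as $v=(u,w)\in\mathbb{R}^s\oplus\mathbb{R}^\ell$. Because the off-diagonal blocks of $A_\mu$ have a single nonzero entry each (at positions $(1,1)$ and $(\ell,s)$ respectively) and the bottom-right block is the upper shift, the equation $A_\mu v=\lambda v$ reduces to
$$
(H_\mu u)_i+\delta_{i,1}w_1=\lambda u_i,\qquad w_{j+1}=\lambda w_j\ (1\le j<\ell),\qquad u_s=\lambda w_\ell.
$$
These relations force $w_j=\lambda^{j-1}w_1$ and $w_1=u_s/\lambda^\ell$, so the upper block simplifies to $(\lambda I-H_\mu)u=(u_s/\lambda^\ell)\,e_1$. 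As a preliminary step I would observe that $A_\mu$ is irreducible (the chain $1\to S_1\to\cdots\to S_\ell\to s$ links the $S$-states with the already irreducible block $H_\mu$), so by Perron--Frobenius the eigenvector $v$ can be chosen strictly positive and in particular $u_s>0$ and $w_1>0$.

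Next, for $\lambda>\lambda_0:=\rho(H_\mu)=\rho(H_0)$ (the two radii agree because $f_\mu|_{\Lambda_{i_0}(\mu)}$ is conjugate to $f_0|_{\Lambda_{i_0}}$, so $H_\mu$ and $H_0$ are conjugate as 0-1 matrices), I would invert $\lambda I-H_\mu$, read off the $s$-th coordinate, and cancel $u_s$ to reach the scalar characteristic equation
$$
\lambda^\ell=\bigl((\lambda I-H_\mu)^{-1}\bigr)_{s,1}=\sum_{n=0}^{\infty}\lambda^{-n-1}(H_\mu^n)_{s,1}.
$$
By irreducibility of $H_\mu$ the resolvent has a simple pole at $\lambda_0$ with a rank-one residue proportional to $xy^T$, where $x,y>0$ are the right and left Perron eigenvectors; hence $\phi(\lambda):=((\lambda I-H_\mu)^{-1})_{s,1}$ is strictly decreasing from $+\infty$ to $0$ on $(\lambda_0,\infty)$, while $\lambda^\ell$ increases from $\lambda_0^\ell$ to $+\infty$, so the equation has a unique solution $\lambda^\star>\lambda_0$. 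To conclude $\lambda_\mu=\lambda^\star$ I would rule out the degenerate case $\lambda_\mu=\lambda_0$ by the Fredholm alternative: the equation $(\lambda_0 I-H_\mu)u=w_1 e_1$ with $w_1>0$ would force the left Perron eigenvector $y>0$ of $H_\mu$ to satisfy $y_1=0$, which is false. Combined with the trivial lower bound $\lambda_\mu\ge\lambda_0$ coming from the fact that $H_\mu$ is a principal submatrix of $A_\mu$, this gives $\lambda_\mu=\lambda^\star>\lambda_0$.

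The main obstacle, and really the only nontrivial input, will be the blow-up $\phi(\lambda)\to+\infty$ as $\lambda\downarrow\lambda_0$: this is the unique place where irreducibility of $H_\mu$ enters quantitatively, via the strict positivity of the right and left Perron eigenvectors. The rest is a direct reading of the sparse block structure of $A_\mu$, and because the argument never takes a limit in $\mu$, the strict inequality $\lambda_\mu>\lambda_0$ holds uniformly for every $\mu>0$ for which the subsystem $f_\mu|_{\Lambda_R}$ of the previous section is defined.
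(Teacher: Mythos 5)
Your proof is correct, but it takes a genuinely different and more explicit route than the paper's. The paper verifies that $A_\mu$ is irreducible and then invokes the principal-minor clause of the Perron--Frobenius theorem (the spectral radius of a proper principal submatrix of an irreducible non-negative matrix is strictly smaller than that of the whole matrix), applied along a chain of submatrices terminating at $H_\mu$, whose radius equals $\lambda_0$ by conjugacy. You instead solve the eigenvector equation block by block, eliminate the $w$-coordinates, and arrive at the scalar characteristic equation $\lambda^{\ell}=\bigl((\lambda I-H_\mu)^{-1}\bigr)_{s,1}$; the strict inequality then comes from combining the non-strict bound $\lambda_\mu\ge\rho(H_\mu)$ with the exclusion of equality by pairing $(\lambda_0 I-H_\mu)u=w_1e_1$ against the left Perron vector of $H_\mu$ (your Fredholm step). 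The monotonicity and blow-up analysis of $\phi$ is not needed for the strict inequality itself, but it is a genuine bonus: it identifies $\lambda_\mu$ as the unique root of an explicit equation and makes visible how the entropy gain decays as $\ell=N_1+N_2-1$ grows, i.e.\ as $\mu\to0$ and the excursion of the homoclinic orbit outside $V_{i_0}$ lengthens --- quantitative information the paper's soft argument cannot provide. The cost is length and two extra checks (algebraic simplicity of the Perron root of $H_\mu$ for the rank-one residue, and $(H_\mu^n)_{s,1}>0$ for some $n$ for strict monotonicity of $\phi$), both of which do follow from the irreducibility of $H_\mu$ that the paper also uses.
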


\begin{proof}
    To proof the proposition we use  Perron and Frobenius theorem.

\begin{theorem}[Perron-Frobenius, \cite{gantmacher}]
    Every non-negative $s\times s$ matrix $A$ has a non-negative eigenvector,
    $AU = \lambda U$, with the property that the associated $\lambda$ is equal to the
    spectral radius $|\lambda|_{max}$. If the matrix $A$ is irreducible, then there
    is just one non-negative eigenvector up to multiplication by positive constant,
    and this eigenvector is strictly positive.
    Furthermore, the maximal eigenvalue $\lambda'$ of every principal
    minor (of order less than $s$) of $A$ satisfies $\lambda'\le
    \lambda$. If $A$ is irreducible, then $\lambda' \lneqq \lambda$.
\end{theorem}

    To use this theorem we need the matrix
    $A_{\mu}$ be irreducible, that is, for any pair $i, j$ there is some power $n(i,
    j)$ of $A_{\mu}$ such that $A_{\mu}^{n(i,j)} > 0$.
    


 It is easy to see that $A_{\mu}$ is irreducible.
    Indeed, irreducibility is satisfied by the rectangles $R_i$'s
    because the system $f_{\mu}|_{\Lambda_{i_0}(\mu)}$ is transitive.
    Since for each $S_j$, the iterated $f^{\ell-j}(S_j)$ intersects $R_s$
    and the iterated $f^{j}(R_1)$ intersects $S_j$, we obtain
    the desired property for all elements of $\mathcal{P}$.

    Now we can apply the Perron-Frobenius Theorem to the sub-matrix $A_{\mu,1}$ of the
    irreducible transition matrix $A_{\mu}$, obtained by excluding the last line and the last
    column of $A_{\mu}$. So we obtain that the largest eigenvalue $\lambda_{\mu}$ of
    $A_{\mu}$ is strictly bigger than the largest eigenvalue $\lambda_{\mu,1}$ of
    $A_{\mu,1}$. Even though $A_{\mu,1}$ is not necessarily an irreducible matrix, we can use
    the Perron-Frobenius theorem again to the sub-matrix
    $A_{\mu,2}$, with largest eigenvalue  $\lambda_{\mu,2}$ and obtain that
    $\lambda_{\mu,2}\le \lambda_{\mu}$. We repeat this process to
    obtaining the sub-matrix $H_{\mu}$, whose largest eigenvalue
    $\lambda_{\mu,\ell}$ is equal to $\lambda_0$, because
    the systems $f_{\mu}|_{\Lambda_{i_0}(\mu)}$ and $f_0|_{\Lambda_{i_0}}$ are topologically
    conjugated. Thus we have $\lambda_{\mu}\gneqq
    \lambda_0$.\hfill$\square$\newline

    To conclude the proof of Theorem
    \ref{t.responsible} observe that for all $C^2$-neighbor\-hood
    $\mathcal{V}$ of $f=f_0$ we can take $f_{\mu}$ with $\mu$ very close
    to $0$ such that $f_{\mu}\in\mathcal{V}$ and the Proposition
    \ref{t.lambda} holds. So, since $\Lambda_{i_0}$ is responsible for the
    entropy of $f_0$, $h(f_{\mu}) \ge h(f_{\mu}|_{\Lambda_{i_0}(\mu)}) >
    h(f_0|_{\Lambda_{i_0}})=h(f_0)$. Then $h(f_{\mu})\neq h(f_{0})$ and thus
    $f_0$ is a point of entropy variation.

\end{proof}

\section{ Proof of Theorem \ref{t.nonresponsible1}}

Here we recall some results of Yomdin \cite{yomdin} and Newhouse \cite{newhousecontinuity} for the calculation of the defect of continuity of the entropy
function in the space of $C^k$-diffeomorphisms. We use their result for maximal invariant subsets of dynamics.

Let $\Lambda_f = \bigcap_{n \in \mathbb{Z}} f^n(U)$ be an isolated maximal invariant subset and $r(n, \epsilon, f)$ denote the maximal cardinality of an $(n, \epsilon)-$separated subset of $\Lambda_f.$ Denote $r(\epsilon, f) = \limsup_{n \rightarrow \infty} \frac{1}{n} r(n, \epsilon, f).$ By definition $h(f | \Lambda_f) = \lim_{\epsilon \rightarrow 0} r(\epsilon, f).$ We need to find an upper bound for $h(f | \Lambda_f) - r(\epsilon, f).$ Local entropy turns out to be such upper bound. The local entropy can be defined for any invariant measure as follows: Take $\Lambda $ any compact subset of $M$ and for any $x \in \Lambda, \epsilon > 0$ let $$W^s(x, n, \epsilon) := \{y \in M, d(f^i(x), f^i(y)) \leq \epsilon \quad  \text{for} \quad i \in [0, n)\}.$$ Let $r(n ,\delta, \epsilon, \Lambda, x) = max \,\, Card (F)$ where $$F \subset \Lambda \cap W^s(x, n, \epsilon)$$ is $(n, \delta)-$separated. Taking $r(n, \delta, \epsilon, \Lambda) = \sup_{x \in \Lambda} r(n ,\delta, \epsilon, \Lambda, x)$ we define
$$
 r(\epsilon,  \Lambda) = \lim_{\delta \rightarrow 0} \limsup_{n \rightarrow \infty} \frac{1}{n} \log
r(n, \delta, \epsilon, \Lambda)$$
and for any invariant measure $\mu$ we define $h_{loc}^{\mu} (\epsilon, f) = \lim_{\sigma  \rightarrow 1} \inf_{\mu(\Lambda) > \sigma} r(\epsilon, \Lambda).$ Finally define $h_{loc} (\epsilon, f) = \sup_{\mu} h_{loc}^{\mu} (\epsilon, f).$
Newhouse proved (Theorem 1 of \cite{newhousecontinuity}) that:
$$
 h (f) \leq r(\epsilon, f) + h_{loc}(\epsilon, f).
$$
 Take any $g$ close to $f$  define $\Lambda_g := \bigcap_{n \in \mathbb{Z}} g^{n} (U)$.
\begin{proposition} \label{adaptation}
The map $g \rightarrow h(g| \Lambda_g)$ is upper semicontinuous in $C^{\infty}-$topology.
\end{proposition}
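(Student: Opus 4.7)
The plan is to adapt Newhouse's proof of $C^\infty$-upper semicontinuity of the topological entropy (from \cite{newhousecontinuity}) to the setting of a fixed isolating neighborhood $U$, using the two ingredients recalled just above: (i) the Newhouse inequality $h(g|\Lambda_g) \le r(\epsilon,g) + h_{loc}(\epsilon,g)$ applied to the isolated maximal invariant set $\Lambda_g = \bigcap_{n\in\mathbb{Z}} g^n(U)$, and (ii) Yomdin's bound on $h_{loc}(\epsilon,g)$ in terms of the $C^r$-norms of $g$, which yields $h_{loc}(\epsilon,g) \to 0$ as $\epsilon \to 0$ uniformly over any $C^\infty$-bounded family of diffeomorphisms.

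Given a sequence $g_k \to g$ in $C^\infty$, I would first establish the upper semicontinuity of $\Lambda_{g_k}$ in the Hausdorff sense: for each $\eta > 0$, choose $N$ so that $\bigcap_{|n|\le N} g^n(U) \subset V_\eta(\Lambda_g)$, and then $k_0$ so that $\Lambda_{g_k} \subset \bigcap_{|n|\le N} g_k^n(U) \subset V_{2\eta}(\Lambda_g)$ for all $k \ge k_0$. Together with the $C^1$-closeness $g_k \to g$, an $(n,\epsilon)$-separated subset of $\Lambda_{g_k}$ then translates into an $(n,\epsilon/2)$-separated subset sitting inside this neighborhood of $\Lambda_g$; comparing with the separated-set rate of $g$ on this enlarged set would give
$$
\limsup_{k\to\infty} r(\epsilon, g_k) \le r(\epsilon/2, g).
$$

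Next, since the $C^r$-norms $\|g_k\|_{C^r}$ are uniformly bounded in $k$ for every $r$, Yomdin's estimate furnishes a single function $\psi(\epsilon)$, depending only on $g$ and those uniform bounds, with $\psi(\epsilon) \to 0$ as $\epsilon \to 0$ and $h_{loc}(\epsilon, g_k) \le \psi(\epsilon)$ for all $k$. Combining this with the previous step through the Newhouse inequality yields
$$
\limsup_{k\to\infty} h(g_k|\Lambda_{g_k}) \le r(\epsilon/2, g) + \psi(\epsilon),
$$
so letting $\epsilon \to 0$ gives $\limsup_k h(g_k|\Lambda_{g_k}) \le h(g|\Lambda_g)$, which is the upper semicontinuity claimed.

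The main technical obstacle is the comparison step that converts $(n,\epsilon)$-separated subsets of $\Lambda_{g_k}$ into separated subsets visible to the dynamics of $g$ on $\Lambda_g$ itself, rather than on a thin neighborhood of it. The naive $C^1$-comparison loses an amount of separation proportional to $\|Dg\|^n$ times the Hausdorff displacement, which grows with $n$, so one has to absorb this loss either by choosing the neighborhood scale $\eta$ in terms of $n$ via a diagonal argument, or by observing that orbits of $g$ contained in $V_\eta(\Lambda_g)\setminus\Lambda_g$ must escape $U$ within a time bounded by a function of $\eta$, so that their contribution to the separated-set rate vanishes in the large-$n$ limit.
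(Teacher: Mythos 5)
Your overall architecture is the same as the paper's: you bound $h(g_k|\Lambda_{g_k})$ by $r(\epsilon,g_k)+h_{loc}(\epsilon,g_k)$ (Newhouse's Theorem 1), kill the local entropy term uniformly via Yomdin using the $C^\infty$-bounds on the sequence, and then need upper semicontinuity of the separated-set rate. The gap is in that last step, and you have in fact put your finger on it yourself. The inequality $\limsup_k r(\epsilon,g_k)\le r(\epsilon/2,g)$ does not follow from $C^1$-closeness plus Hausdorff upper semicontinuity of $\Lambda_{g_k}$: to convert an $(n,\epsilon)$-separated set for $g_k$ into an $(n,\epsilon/2)$-separated set for $g$ you need $d(g_k^i x, g^i x)$ small for all $i<n$, which requires $k$ large \emph{depending on} $n$; but $r(\epsilon,g_k)$ is a $\limsup$ over $n$ for each fixed $k$, so the quantifiers are in the wrong order. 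Neither of your two proposed repairs closes this: choosing the neighborhood scale $\eta$ as a function of $n$ is not available (for fixed $k$ you must handle all $n$), and the escape-time observation addresses the comparison between $g$-orbits near $\Lambda_g$ and on $\Lambda_g$, not the comparison between $g_k$-orbits and $g$-orbits, which is where the $\|Dg\|^n$ loss occurs.

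The paper avoids this entirely by routing through covering entropy rather than separated sets: fix an open cover $\mathcal{A}$ of the isolating neighborhood $U$ with diameter at most $\epsilon$; then $r(\epsilon,g|\Lambda_g)\le h(\mathcal{A},g)$, and $h(\mathcal{A},g)=\inf_n \frac1n\log N\bigl(\bigvee_{i=0}^{n-1}g^{-i}\mathcal{A}\bigr)$ by subadditivity. Each term of this infimum is upper semicontinuous in $g$ (a finite subcover by open sets of a compact set persists under small perturbation, together with the upper semicontinuity of $\Lambda_g$ that you did establish), and an infimum of upper semicontinuous functions is upper semicontinuous. The infimum structure is exactly what dissolves the order-of-limits problem: one fixes $n$ first and then perturbs. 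You should replace your separated-set comparison by this covering argument; the rest of your proof then goes through as written.
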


\begin{proof}
The proof is an adaptation of Newhouse proof.  We use that for any  $g, h(g|\Lambda_g) \leq r(\epsilon, g|\Lambda_g) + h_{loc}(\epsilon, g|\Lambda_g)$ repeating the arguments of Newhouse. For any $C^{\infty}$ diffeomorphism $f$ and $g$ close enough in $C^{\infty}-$topology, the local entropy of $g$ is small (This comes from Yomdin result \cite{yomdin} ) and $r(\epsilon, g|\Lambda(g))$ is also upper semicontinuous. More precisely, Let $\mathcal{A}$ be any cover of $U$ with $diam(U) \leq \epsilon$ then $r(\epsilon, g|\Lambda_g) \leq h(\mathcal{A}, g)$ where $h(\mathcal{A}, g)$ is the entropy of the covering. It is easy to see that $g \rightarrow h(\mathcal{A}, g)$ is upper semicontinuous, because it is infimum of upper semicontinuous functions.

A similar proof to Yomdin Theorem \cite{yomdin} implies
\begin{proposition} \label{defectck}
    Let $f:M\to M$ be $C^k$ e $g_n\to f$ in $C^k$ topology then,
        \begin{equation}
            \limsup_{n \rightarrow \infty} h(g_n |\Lambda(g_n))\le h(f | \Lambda(f))+\frac{2m}{k}R(f),
        \end{equation}
    where $k\ge 1$, $m=\dim M$ e $\displaystyle{R(f)=\lim_{n\to\infty}{
    \frac{1}{n}\log\max_{x\in M}\|Df^n(x)\|}}.$
\end{proposition}



%

The above semi continuity results are key points of the proof of Theorem \ref{t.nonresponsible1}.
So we state our setting again:
   $f:M \to M$ is a diffeomorphism (Here $f$ stands for $f_0$ in Theorem \ref{t.nonresponsible1}.)  such that the homoclinic tangency $\mathcal{O}(q)$ corresponds to the \textbf{non} responsible basic set. The nonwandering set $\Omega(f) = \bigcup_{i=1}^{k} \Lambda_i \cup \mathcal{O}(q)$ where $q$ is a tangency corresponding to $\Lambda_{i_0}.$

Shub in \cite{shub.stabilite} defined a filtration adapted to a homeomorphism $f:M\to M$ as a sequence $\varnothing = M_0 \subset M_1\subset \dots \subset M_k = M$, where each $M_i$ is a compact $C^{\infty}-$submanifold with boundary of $M$ such that
\begin{itemize}
  \item $\dim M_i = \dim M$;
  \item $f(M_i)\subset int(M_i)$.
\end{itemize}

Given a filtration $\mathcal{M}$ adapted to $f$, $K_{i}^f(\mathcal{M}) = \bigcap_{n\in \Z}f^n(M_{i}\setminus M_{i-1})$ is the maximal $f$-invariant subset of $M_{i}\setminus M_{i-1}$, which is compact. Furthermore, we denote $K^f(\mathcal{M})=\bigcup_{i=1}^k K_{i}^f(\mathcal{M})$ and for the nonwandering set $\Omega(f)$ we have
$$
    \Lambda_{i}=\Omega(f)\cap(M_{i}\setminus M_{i-1}).
$$

\begin{proposition}[\cite{shub.stabilite}]\label{shub}
    Let $\mathcal{M}$ be a filtration adapted to $f$ and $U$ a neighborhood of $K^{f}(\mathcal{M})$. Then there exists a $C^0$-neighborhood $\mathcal{U}$ of $f$ in the space of homeomorphisms on $M$ such that, for each $g\in \mathcal{U}$, $\mathcal{M}$ is a filtration adapted to $g$ and $K^g(\mathcal{M})$ is contained in $U$. Moreover, taking $U_{i}=(M_{i}\setminus M_{i-1})\cap U$, we can choose a neighborhood of $\mathcal{U}$ such that $K_{i}^{g}(\mathcal{M})\subset U_{i}$.
\end{proposition}

Suppose that $\Lambda_{i_0}$ is the piece with external homoclinic tangency.  By Palis-Takens \cite{palistakens}, for each diffeomorphism with a homoclinic tangency as above, there exists a filtration $\mathcal{M}$ for the decomposition $\Omega(f)=\Lambda_1\cup\Lambda_2\cup\dots\cup\Lambda_k\cup\mathcal{O}(q)$ such that
\begin{itemize}
  \item[(i)] $\Lambda_i = \bigcap_{n\in \Z}f^n(M_i \setminus M_{i-1})$, for $i\neq i_0$;
  \item[(ii)] $\Lambda_{i_0}\cup \mathcal{O}(q) = \bigcap_{n\in \Z}f^n(M_{i_0} \setminus M_{i_0 -1})$.
\end{itemize}

Consider $g$ a $C^{\infty}-$perturbation of $f$. As $\mathcal{O}(q)$ is the unique orbit of tangency and all pieces $\Lambda_i, i \neq i_0$ are hyperbolic sets then $h (f | \Lambda_i) = h( g|\Lambda_i), i \neq i_0$   and
using Proposition \ref{adaptation} we obtain that $h(g| \Lambda_{i_0} (g) ) \leq  h(f | \Lambda_{i_0} (f)) + \epsilon$ where $\epsilon \rightarrow 0$ as $g$ converges to $f$ and as $h(f | \Lambda_{i_0} (f)) < h(f)$ we conclude that $h(f) = h(g).$

To prove the second item of Theorem (in $C^k-$topology), we use Proposition \ref{defectck}. It is enough to consider $\alpha_k = \frac{2m}{k} R(f)$ and using filtrations as the proof of the first item of Theorem.

\end{proof}


\section{Proof of theorem \ref{t.nonresponsible}}
To prove  Theorem \ref{t.nonresponsible} we
construct a system with a horseshoe and a homoclinic tangency
corresponding to a hyperbolic fixed point outside the horseshoe.
Then we perturb the system in a small neighborhood of the tangency to
create a transversal intersections (using
$C^1$-perturbations ``Snake like" as in Newhouse
\cite{newhouseminhoca}) to obtain a new system with larger topological
entropy.

Consider the system $f$ on the sphere $\mathbb{S}^2$ whose orbits
follow the meridians from $a$ (the North Pole) to $b$
(the South Pole). Suppose that the system has a transverse homoclinic point and a
homoclinic loop in two disjoint regions. These regions are
sorrounded by meridians. See the Figure \ref{exemplo01}. Indeed, fistly consider a system with two homoclinic loop. By making an small perturbation  on one of the homoclinic loops one obtain a transverse homoclinic orbit and consequently horseshoes. 

\begin{figure}[htb!]
 \begin{minipage}[b]{0.50\linewidth}
  \centering
  \psfrag{a}{$p_{\infty}$}
  \psfrag{b}{$p_0$}
  \psfrag{c}{$\Gamma$}
  \psfrag{d}{$p$}
  \includegraphics[width=4.0cm]{exemplo02}\\
  \caption{System with a horseshoe and a homoclinic loop.}
  \label{exemplo01}
\end{minipage} \hfill
\begin{minipage}[b]{0.48\linewidth}
  \centering
  \psfrag{a}{$p_{\infty}$}
  \psfrag{b}{$p_0$}
  \psfrag{c}{$Q$}
  \includegraphics[width=5cm]{exemplo03}\\
  \caption{Region of the horseshoe $\Gamma$.}
  \label{exemplo03}
 \end{minipage}
\end{figure}

We suppose this homoclinic loop is associated to a fixed hyperbolic
point $p$ which has derivative with eigenvalues $\lambda(p)=3$ and
$\lambda(p)^{-1}=3^{-1}$. The horseshoe $\Gamma$ in the first region
is a two legs horseshoe, $p_{\infty}$ is the source which send the
orbits to a topological disc $Q$ whose interior is a trapping
neighborhood for $\Gamma$. The sink of this horseshoe coincides with
$p_0$. See the figure \ref{exemplo03}.

Thus the nonwandering set $\Omega(f)$ consists of three sinks, a
source, a (isolated) horseshoe and a hyperbolic point on the
homoclinic loop. Then, the topological entropy of $f$ is
$h(f|_{\Gamma})=\log 2$. Observe that this horseshoe is responsible 

Now we perturb $f$ in the $C^1$-topology to obtain a new system $g$,
breaking the homoclinic loop.
 Here we have an interval of tangency corresponding to the saddle $p$ and can apply Theorem \ref{minhoca}. 
By theorem \ref{minhoca} we obtain a diffeomorphism $g$ such that for small $\epsilon$,  $h(g|_{\Lambda}) \geq
 \log|\lambda(p)|-\varepsilon.$ As the tangency for $f$ is associated to a fixed point and $h(g) > h(f)$ the proof is complete.


\section{Proof of Theorem \ref{non.newhouse}}
It is well known that topological entropy function $f \rightarrow h_{top}(f)$ is a continuous function in $C^{\infty}$ topology for systems defined on a bi-dimensional compact manifold. Using this observation, our example in the proof of the second  statement of theorem \ref{t.responsible} shows that the Newhouse perturbation Theorem \ref{minhoca} can not be applied in $C^{\infty}$ topology.

\section{Lack of lower semi continuity}\label{exemplo.descontinua} 
We exhibit an example of discontinuity (lower semicontinuity) of the topological entropy in dimension three in $C^{\infty}$ topology 
. Observe that in $C^{\infty}$ toplogy the topological entropy if upper semi continuous.

\begin{theorem} \label{examplediscontinuity}
    There exists a diffeomorphism on the closed $3$-dimensional ball which is a discontinuity point of the topological entropy in the $C^{\infty}-$topology.
\end{theorem}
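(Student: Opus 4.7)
The plan is to produce a $C^\infty$ diffeomorphism $F$ of $B^3$ together with a sequence $F_n\to F$ in the $C^\infty$-topology such that $\liminf_n h(F_n)<h(F)$. Combined with the $C^\infty$-upper semicontinuity of entropy recalled in Proposition \ref{adaptation}, this gives the desired discontinuity point: the failure will be a failure of \emph{lower} semicontinuity.

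A first observation is that no construction supported on a $C^\infty$-normally-attracting invariant $2$-sphere of $F$ can work. On surfaces topological entropy is continuous in the $C^\infty$-topology (Newhouse--Yomdin--Katok), so the persistent surface dynamics of $F_n$ on the persistent invariant sphere would converge in entropy to that of $F$, leaving no room for a jump. The construction must therefore rely on a genuinely $3$-dimensional, non-(partially)-hyperbolic mechanism, consistent with the introductory remark that the example is far from partially hyperbolic.

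The proposed construction uses the planar tangency scenario of Theorem \ref{t.nonresponsible} as a template, but realized in $3$D. Start from the surface diffeomorphism $f_0:S^2\to S^2$ of Theorem \ref{t.nonresponsible} (horseshoe $\Gamma$ with $h(f_0)=\log 2$, homoclinic tangency at a fixed saddle $p\notin\Gamma$ with eigenvalue $\lambda(p)=3$), embed $f_0$ into a $C^\infty$ diffeomorphism $F$ of $B^3$ with strong normal contraction toward an invariant $2$-sphere, and use the transverse direction to install near the tangency orbit a Newhouse-type snake mechanism (cf.\ Theorem \ref{minhoca}) that lives \emph{off} the $2$-sphere. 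This creates a non-hyperbolic invariant subset contributing extra entropy, so that $h(F)\ge \log 3-\varepsilon$. The perturbations $F_n$ are $C^\infty$-small straightenings of this transverse snake: although $F_n\to F$ in $C^\infty$, the straightening progressively breaks the matching conditions needed for the snake's extra recurrence to close up as an invariant set, so the off-surface dynamics is re-absorbed by the normal contraction and only the continuation of $\Gamma$ remains recurrent. Hence $h(F_n)\to h(f_0)=\log 2<h(F)$.

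The main obstacle is the apparent conflict with the $C^1$-robustness of basic hyperbolic sets: any hyperbolic horseshoe of $F$ persists with constant entropy under $C^\infty$-perturbations, so the entropy-dropping extra set cannot be a basic set. The construction must therefore arrange the extra entropy of $F$ to be carried by a non-hyperbolic invariant set that nonetheless has positive topological entropy — a delicate balance between producing enough recurrence for a positive extra entropy contribution and keeping that recurrence $C^\infty$-fragile. Verifying that the transverse snake indeed has this property, and that the $C^\infty$-convergence $F_n\to F$ genuinely unhooks its extra recurrence without creating a $C^1$-robust horseshoe along the way, is where the technical heart of the theorem lies.
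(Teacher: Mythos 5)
Your proposal correctly identifies the shape of the problem (the jump must violate \emph{lower} semicontinuity, and the entropy-carrying set of $F$ cannot be a robust hyperbolic basic set nor live on a normally attracting invariant surface), but the actual construction is not carried out, and the mechanism you sketch points in a direction that would likely fail. The Newhouse ``snake'' of Theorem \ref{minhoca} produces transverse homoclinic intersections and hence genuine hyperbolic horseshoes; any such horseshoe is a $C^1$-robust basic set whose entropy persists under all small perturbations, which is precisely what your $F_n$ must avoid. You acknowledge this tension yourself and defer it (``where the technical heart of the theorem lies''), so the one step that makes the theorem true --- exhibiting a concrete invariant set with positive entropy that a $C^\infty$-small perturbation destroys --- is missing. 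As written, this is a plan with an unverified core, not a proof.

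For contrast, the paper's construction sidesteps hyperbolicity entirely and is much more elementary. One takes a $C^\infty$ isotopy $f_t$ on the $2$-disc joining a Smale horseshoe ($t=0$) to a contraction ($|t|$ near $1$), and stacks these maps on the horizontal slices $\mathbb{D}^2_t$ of the $3$-ball to get $F(x,y,t)=(\tilde f_t(x,y),t)$. Every slice is invariant, the middle slice carries the horseshoe, so $h(F)\ge\log 2$; but the horseshoe has a neutral ($z$) direction and is not a hyperbolic set of $F$. Composing $F$ with the time-$\tau$ map of a north--south flow that pushes every slice strictly downward gives $G_\tau\to F$ in $C^\infty$ as $\tau\to 0^+$, while $\Omega(G_\tau)$ reduces to the two poles, so $h(G_\tau)=0$. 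The fragility you were looking for is thus obtained for free from the neutral stacking direction, with no tangency, no snake, and no delicate balance to verify.
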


\begin{proof}
Consider a horseshoe map $f:D\to D$ of class $C^{\infty}$, where $D$ is the unitary disc of $\R^2$ as the Picture \ref{age.no.disco}. We divide it in three parts cut by the lines $y=\frac{1}{3}$ and $y=-\frac{1}{3}$.

\begin{figure}[h]
  \centering
  \includegraphics[width=6cm]{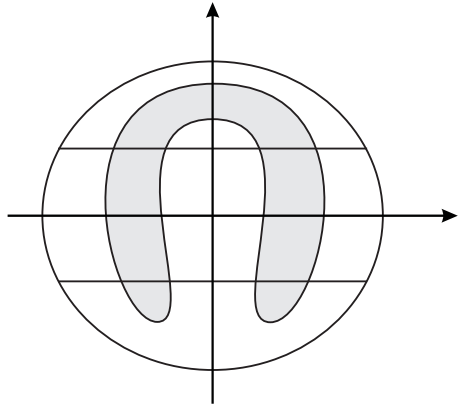}\\
  \caption{Horseshoe map on the disc.}
  \label{age.no.disco}
\end{figure}

We will now construct an isotopy which deforms the horseshoe map to a contraction. Let us produce a family $f_t$ as follows. First of all consider the maps $\alpha_t : \R^2\to \R^2$ given by $\alpha_{t}(x,y)= \Big( \frac{2}{3}x, (1-t)y -\frac{5}{3}t \Big)$, with $\varepsilon_0\le t\le 1$, where $\varepsilon_0>0$ is small enough such that the range $\alpha_{\varepsilon_0}(D)$ is under the $x-$axis. Now define $f_t = f\circ \alpha_t$ and note that for $t_0=\varepsilon_0$, the map $f_{t_0}$ is a contraction with a unique fixed point, which is an attractor. We can assume, unless reparametrization, that the family $f_t$ is defined for $t$ in $t\in[0,1]$ and that $f_0=f$. Extend this family for $t\in[-1,1]$ making $f_t=f_{-t}$ when $t\in [-1,0)$. Moreover, for each $t\in(-1,1)$ contract the domain of each $f_t$ is defined by decreasing the radius of $D$ to $\sqrt{1-t^2}$, i.e., define a map $C_t:D\to D_t$, where $D_t=\{(x,y):x^2+y^2\le1-t^2\}$, given by $C_t(x,y)=\big((\sqrt{1-t^2})x,(\sqrt{1-t^2})y\big)$. Thus for each $t\in(-1,1)$ define $\tilde f_{t}=C_t^{-1}\circ f_t\circ C_t$.

\begin{figure}
  \centering
  \includegraphics[width=11cm]{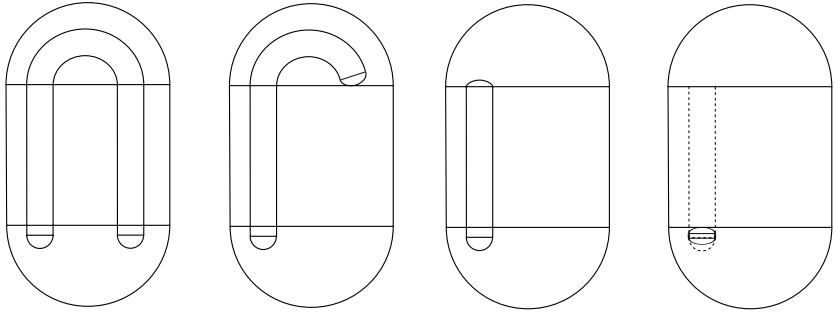}\\
  \caption{Family $f_t$ on the disc.}
  \label{deformacao}
\end{figure}

Now construct a new map $G$ defined on the $3$-dimensional disc $\mathbb{D}^3=\{(x,y,z)\in \R^3 : x^2+y^2+z^2\le1\}$ in the following way: far each $2$-dimensional disc $\mathbb{D}^2_{t} = \{(x,y,z)\in\mathbb{D}^3 : z=t \}$, for $t\in(-1,1)$, define $F(x,y,z=t)=(\tilde f_t(x,y),t)$.

\begin{figure}[htb!]
 \begin{minipage}[b]{0.50\linewidth}
  \centering
  \includegraphics[width=5.5cm]{deformacao3D}\\
  \caption{}
  \label{deformacao}
\end{minipage} \hfill
\begin{minipage}[b]{0.48\linewidth}
  \centering
  \includegraphics[width=5.5cm]{ferradura3D5}\\
  \caption{}
  \label{}
 \end{minipage}
\end{figure}

Consider now a $C^{\infty}-$flow $\phi(t,(x,y,z))$ on the $3$-dimensional disc $\mathbb{D}^3=\{(x,y,z)\in \R^3 : x^2+y^2+z^2\le1\}$ with just two singularities: the north pole $N=(0,0,1)$ and the south pole $S=(0,0,-1)$, and such that the orbit of each point in $\mathbb{D}^3\setminus \{N,S\}$ has $N$ as $\alpha$-limit and $S$ as $\omega$-limit. Moreover,  suppose that for each orbit the time $t$ is parametrized such that each $2$-dimensional disc $\mathbb{D}^2_{z_0} = \{(x,y,z)\in\mathbb{D}^3 : z=z_0 \}$, for $z_0\in(-1,1)$, is send in another disc $\mathbb{D}^2_{z_1}$, with $z_1<z_0$.
Denote by $\phi_{\tau}:\mathbb{D}^3\to\mathbb{D}^3$ the time $\tau$ diffeomorphism of this flow. It forms an one parameter family of diffeomorphisms such that in $\tau=0$ we get the identity map on $\mathbb{D}^3$.

Considere the family of diffeomorphisms $G_{\tau} = \phi_{\tau}\circ F$. For $\tau=0$, we have that $G_{0}=F$, that the disc $\mathbb{D}^2_{0}$ is $F$-invariant and $F|_{\mathbb{D}^2_{0}}=f$. Furthermore, the entropy of $f$ is $h(f)=\log 2$, which implies that $h(F) \ge h(F|_{\mathbb{D}^2_{0}})=\log 2$. Nevertheless, if $\tau>0$ is small, the entropy of $G_{\tau}$ vanishes, because the set $\Omega(G_{\tau})$ is constituted just by $S$ and $N$.

In short, we got an arc of diffeomorphisms $G_{\tau}$ such that for each $\tau>0$ the entropy of $G_{\tau}$ is zero and for $\tau=0$ the entropy jumps to $\log 2$. It proves that the topological entropy is not lower semicontinuous in $G_{0}$.
\end{proof}



\end{document}